\newcommand*{\1}{1\!\!\,\mathrm{I}}
\newcommand*{\abs}[1]{\left|#1\right|}
\newcommand*{\norm}[1]{\left\|#1\right\|}
\newcommand*{\Prob}[1]{\mathbf{P} \left\lbrace #1\right\rbrace}
\newcommand*{\as}{\text{a.~s.}}
\newcommand*{\qv}[1]{\left\langle #1\right\rangle}
\newcommand*{\jqv}[2]{\left\langle #1,#2\right\rangle}
\newcommand*{\ve}{\varepsilon}
\newcommand*{\vf}{\varphi}
\newcommand*{\mbN}{\mathbb{N}}
\newcommand*{\mbQ}{\mathbb{Q}}
\newcommand*{\mbR}{\mathbb{R}}
\newcommand*{\mcB}{\mathcal{B}}
\newcommand*{\mcF}{\mathcal{F}}
\theoremstyle{plain}
\newtheorem{theorem}{Theorem}
\newtheorem{lemma}[theorem]{Lemma}
\newtheorem{proposition}[theorem]{Proposition}
\theoremstyle{definition}
\newtheorem{definition}[theorem]{Definition}
\theoremstyle{remark}
\title
[Weak convergence of Harris flows]
{A note on weak convergence of\\ the $n$-point motions of Harris flows}
\author{V.~V.~Fomichov}
\address{Vladimir~Fomichov: Institute of Mathematics, National Academy of Sciences of Ukraine, Tereshchenkivska str.~3, Kiev~01004, Ukraine}
\email{v-vfom@imath.kiev.ua}
\subjclass[2010]{60G60, 60B12, 60H20}
\keywords{Harris flows, Brownian stochastic flows, $n$-point motions, weak convergence}
\begin{document}

\begin{abstract}
In this note we extend the main results of~\cite{Dorogovtsev2004} and~\cite{Malovichko}, which concern the weak convergence of the $n$-point motions of smooth Harris flows to those of the Arratia flow, to the case when the covariance functions of these Harris flows converge pointwise to a covariance function whose support is of zero Lebesgue measure.
\end{abstract}

\maketitle

The main aim of this note is to generalize the results of~\cite{Dorogovtsev2004} and~\cite{Malovichko} concerning the weak convergence of the $n$-point motions of Harris flows.

We begin by recalling the definition of a Harris flow (e.~g., see~\cite[Definition~1.2]{DorogovtsevFomichov}).

\begin{definition}
A random field $\{x(u,t),\; u\in\mbR,\; t\geqslant 0\}$ is called a \emph{Harris flow} with covariance function $\Gamma$ if it satisfies the following conditions:
\begin{enumerate}
\item[(i)]
for any $u\in\mbR$ the stochastic process $\{x(u,t),\; t\geqslant 0\}$ is a Brownian motion with respect to the common filtration $(\mcF_t:=\sigma\{x(v,s),\; v\in\mbR,\; 0\leqslant s\leqslant t\})_{t\geqslant 0}$ such that $x(u,0)=u$;

\item[(ii)]
for any $u,v\in\mbR$, if $u\leqslant v$, then $x(u,t)\leqslant x(v,t)$ for all $t\geqslant 0$;

\item[(iii)]
for any $u,v\in\mbR$ the joint quadratic variation of the martingales $\{x(u,t),\; t\geqslant 0\}$ and $\{x(v,t),\; t\geqslant 0\}$ is given by
$$
\jqv{x(u,\cdot)}{x(v,\cdot)}_t=\int\limits_0^t \Gamma(x(u,s)-x(v,s))\,ds,\quad t\geqslant 0.
$$
\end{enumerate}
\end{definition}

Note that the function $\Gamma$ is necessarily non-negative definite and, in particular, symmetric. Besides, without loss of generality we always assume that
$$
\Gamma(0)=1,
$$
so that the one-point motions of Harris flows we consider are standard Brownian motions.

The existence of random fields satisfying the above conditions~(i), (ii) and (iii) under mild assumptions on the covariance function was proved in~\cite{Harris}.

A Harris flow with covariance function $\Gamma=\1_{\{0\}}$ is called the \emph{Arratia flow} (here $\1_A(z)\equiv\1\{z\in A\}$ stands for the indicator function of the set $A$). It is one of the first examples of Harris flows and was initially constructed in~\cite{Arratia} as the weak limit of a family of coalescing simple random walks. Throughout this paper the Arratia flow will be denoted by $\{x_0(u,t),\; u\in\mbR,\; t\geqslant 0\}$.

It is convenient to construct Harris flows with a smooth covariance function as solutions of stochastic differential or integral equations. To be more precise, let us consider the following stochastic integral equation:
\begin{equation}
\label{equation1}
x(u,t)=u+\int\limits_0^t \int\limits_\mbR \vf(x(u,s)-q)\,W(dq,ds),\quad t\geqslant 0,
\end{equation}
where $u\in\mbR$ plays the role of a parameter, $W$ is a Wiener sheet on $\mbR \times [0;+\infty)$ and the function $\vf\in C_0^\infty(\mbR,[0;+\infty))$ (i.~e. infinitely differentiable and with compact support) is symmetric and has a unit $L_2$-norm.

It is known~\cite{Dorogovtsev2004} that under these conditions on the function $\vf$ this equation has a unique strong solution for every $u\in\mbR$ and the random field $\{x(u,t),\; u\in\mbR,\; t\geqslant 0\}$ is a Harris flow with covariance function $\Gamma$ given by
$$
\Gamma(z):=\int\limits_\mbR \vf(z+q)\vf(q)\,dq,\quad z\in\mbR.
$$

Now we can formulate the main results of~\cite{Dorogovtsev2004} and~\cite{Malovichko}. Although these results were proved for the case of the finite time interval $[0;1]$, their proofs remain valid for the more general case of the infinite time interval $[0;+\infty)$ and it is in this form that we formulate them below.

\begin{theorem}
\cite[Theorem~3]{Dorogovtsev2004}
For $\ve>0$ define
\begin{equation}
\label{equation2}
\vf_\ve(q):=\dfrac{1}{\sqrt{\ve}} \vf\left(\dfrac{q}{\ve}\right),\quad q\in\mbR,
\end{equation}
and let $\{x_\ve(u,t),\; u\in\mbR,\; t\geqslant 0\}$ be the Harris flow formed by the solutions of the stochastic integral equation~\eqref{equation1} with $\vf_\ve$ instead of $\vf$. Then for any $n\in\mbN$ and for any $u_1,\ldots,u_n\in\mbR$ the weak convergence
\begin{gather*}
(x_\ve(u_1,\cdot),\ldots,x_\ve(u_n,\cdot)) \stackrel{w}{\longrightarrow} (x_0(u_1,\cdot),\ldots,x_0(u_n,\cdot)),\quad \ve\to 0+,
\end{gather*}
takes place in the space $C([0;+\infty),\mbR^n)$.
\end{theorem}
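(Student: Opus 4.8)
The plan is to establish the weak convergence by the classical two-step route of tightness plus identification of the limit, the identification being the essential difficulty. First I would record that the substitution $q=\ve p$ in the definition of the covariance function attached to $\vf_\ve$ gives $\Gamma_\ve(z)=\Gamma(z/\ve)$, where $\Gamma$ is the covariance function attached to $\vf$; it is continuous, supported in a compact interval $[-R;R]$, bounded by $\Gamma(0)=\norm{\vf}_2^2=1$, and nonnegative. Since $\Gamma$ vanishes off $[-R;R]$ one has $\Gamma_\ve(z)\to\1_{\{0\}}(z)$ pointwise, so the covariance functions collapse to that of the Arratia flow. By condition~(i) each coordinate $x_\ve(u_i,\cdot)$ is a standard Brownian motion, so its law is Wiener measure independently of $\ve$; hence the family $\{(x_\ve(u_1,\cdot),\ldots,x_\ve(u_n,\cdot))\}_\ve$ is tight in $C([0;+\infty),\mbR^n)$, tightness in the product space reducing to tightness of each coordinate.

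Next I would fix a subsequence along which the law converges weakly and, via the Skorokhod representation theorem, realize the convergence $x_\ve(u_i,\cdot)\to y_i$ almost surely, uniformly on compacts, on one probability space. The martingales $M_i^\ve:=x_\ve(u_i,\cdot)-u_i$ satisfy $\qv{M_i^\ve}_t=t$, and since $M_i^\ve(t)$ and $M_i^\ve(t)^2-t$ are uniformly integrable by the Brownian moment bounds, both martingale properties pass to the limit, showing that each $y_i$ is a standard Brownian motion started at $u_i$; order preservation $y_i(t)\leqslant y_j(t)$ for $u_i\leqslant u_j$ passes to the limit directly from condition~(ii). The only remaining point is to identify the cross quadratic variations, that is, to prove for every pair $i<j$ the convergence
$$
A_{ij}^\ve(t):=\int_0^t\Gamma\!\left(\frac{x_\ve(u_i,s)-x_\ve(u_j,s)}{\ve}\right)ds\longrightarrow\int_0^t\1_{\{0\}}(y_i(s)-y_j(s))\,ds,
$$
since $A_{ij}^\ve(t)=\jqv{x_\ve(u_i,\cdot)}{x_\ve(u_j,\cdot)}_t$ by condition~(iii), so that $M_i^\ve M_j^\ve(t)-A_{ij}^\ve(t)$ is a martingale, uniformly integrable in $\ve$ at each fixed $t$. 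Together with $\qv{y_i}_t=t$ this convergence yields $\jqv{y_i}{y_j}_t=\int_0^t\1_{\{0\}}(y_i(s)-y_j(s))\,ds$, and these data identify $(y_1,\ldots,y_n)$ as a solution of the martingale problem with generator $\tfrac12\sum_{i,j}\1_{\{0\}}(x_i-x_j)\,\partial_i\partial_j$, whose solution is known to be unique for the coalescing Brownian system defining the Arratia flow; by uniqueness every subsequential limit has the same law, which gives the asserted convergence.

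The hard part is the displayed convergence, because the integrand concentrates on the diagonal $\{x_i=x_j\}$, a set the pre-limit difference never reaches (the smooth flow being a flow of homeomorphisms), so no continuous-mapping argument applies. I would analyze the difference $z_{ij}^\ve:=x_\ve(u_i,\cdot)-x_\ve(u_j,\cdot)\leqslant 0$, a continuous martingale \emph{in natural scale} with $\qv{z_{ij}^\ve}_t=2\int_0^t(1-\Gamma(z_{ij}^\ve(s)/\ve))\,ds=2t-2A_{ij}^\ve(t)$. From $0\leqslant\Gamma\leqslant 1$ and $\mathrm{supp}\,\Gamma\subseteq[-R;R]$ one has $A_{ij}^\ve(t)\leqslant\int_0^t\1\{|z_{ij}^\ve(s)|\leqslant R\ve\}\,ds$, so before the limiting meeting time $\tau_{ij}:=\inf\{s:y_i(s)=y_j(s)\}$, where $|z_{ij}^\ve|$ is eventually bounded away from $0$, the term $A_{ij}^\ve$ is negligible and $\qv{z_{ij}^\ve}$ converges to $2t$, i.e. the limit difference moves as a Brownian motion of rate $2$. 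For the behaviour at the diagonal I would exploit the natural-scale property through optional stopping: started within $R\ve$ of $0$, the martingale $z_{ij}^\ve$ returns to a fixed macroscopic distance $-\delta$ before reaching distance $-\eta$ with probability at most $(R\ve-\eta)/(\delta-\eta)$, which tends to $0$ as $\ve\to 0+$; hence once the difference enters an $O(\ve)$-neighbourhood of the diagonal it is trapped there, so in the limit $z_{ij}$ is absorbed at $0$ at time $\tau_{ij}$. Combining the two regimes gives $\qv{z_{ij}}_t=2(t\wedge\tau_{ij})$ and therefore $A_{ij}^\ve(t)\to(t-\tau_{ij})^+=\int_0^t\1_{\{0\}}(y_i(s)-y_j(s))\,ds$, which is the required identity; the same trapping estimate, applied to every pair together with the consistency of the two-point motions, produces the full pairwise coalescing structure needed to invoke uniqueness of the martingale problem.
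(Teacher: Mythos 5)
You should note at the outset that the paper does not itself prove this statement: it is quoted from Dorogovtsev's paper, and the closest in-house material is the proof of Theorem~4, which subsumes it (take $\nu=\delta_0$, so that $\Gamma_0=\1_{\{0\}}$, the set $D$ reduces to $\{0\}$, and Lemmas~6, 9 and~10 become trivial or unnecessary). Measured against that template, your skeleton is the same: tightness from each coordinate being a standard Brownian motion (Prohorov), passage of the martingale property to the limit, identification of the cross brackets, and a final characterization of the $n$-point motion of the Arratia flow. The implementation differs in the middle step: the paper (following Malovichko) proves the one-sided bound $\jqv{x(u_i,\cdot)}{x(u_j,\cdot)}_t-\jqv{x(u_i,\cdot)}{x(u_j,\cdot)}_s\leqslant\int_s^t\Gamma_0(x(u_i,r)-x(u_j,r))\,dr$ by a Portmanteau closed-set argument applied to the vector augmented by the bracket process (Lemma~8), and then obtains the reverse inequality from the soft fact that the nonnegative-martingale difference is absorbed at zero; you instead invoke Skorokhod representation and compute the bracket exactly, $\qv{z_{ij}}_t=2(t\wedge\tau_{ij})$, via a gambler's-ruin trapping estimate. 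Your exact computation is legitimate and is in fact cleaner in this smooth case, where $\Gamma_\ve(z)=\Gamma(z/\ve)$ vanishes for $|z|>R\ve$; the paper's closed-set route is what survives in the singular situation of its Theorem~4.

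There is, however, one genuine flaw: the appeal to uniqueness of the martingale problem with generator $\tfrac12\sum_{i,j}\1_{\{0\}}(x_i-x_j)\,\partial_i\partial_j$ is false as stated. Independent Brownian motions started at $u_1<\ldots<u_n$ solve the very same problem: their cross brackets vanish identically, and the occupation time of the diagonal is almost surely Lebesgue-null, so $\jqv{y_i}{y_j}_t=0=\int_0^t\1\{y_i(s)=y_j(s)\}\,ds$ as well. The bracket identity alone therefore does not determine the law; what does is the bracket identity \emph{together with} coalescence, i.e.\ sticking after meeting, and the cheap way to get coalescence is exactly the paper's concluding argument: the order relation survives the limit, so each difference is a nonnegative continuous martingale and hence never leaves zero after hitting it; Lévy's multidimensional characterization then makes distinct coordinates independent Brownian motions up to their meeting times, which identifies the coalescing law. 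You do in fact establish the coalescing structure (order preservation plus trapping), so your argument is repairable from within your own steps, but the characterization must be stated as ``ordered coalescing standard Brownian motions with zero bracket before meeting,'' not as well-posedness of the bare martingale problem. A smaller technical point: your two-barrier estimate $(R\ve-\eta)/(\delta-\eta)$ by itself does not give permanence near the diagonal, since after hitting $-\eta$ the process could still escape later; use instead the one-barrier bound for a nonpositive martingale started at $-\eta$ (probability at most $\eta/\delta$ of ever reaching $-\delta$), or simply observe that the limiting difference is itself a nonpositive martingale and hence absorbed, which is all your computation of $\qv{z_{ij}}$ actually requires.
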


Note that in this case for the covariance function $\Gamma_\ve$ of the Harris flow $x_\ve$ we have
$$
\forall\, z\in\mbR:\quad \Gamma_\ve(z) \longrightarrow \1_{\{0\}}(z),\quad \ve\to 0+,
$$
and also
\begin{equation}
\label{equation3}
\vf_\ve^2 \longrightarrow \delta_0,\quad \ve\to 0+,
\end{equation}
in the sense of generalized functions (here and below $\delta_a$ stands for the delta function at point $a\in\mbR$).

In~\cite{Malovichko} it was shown that the assertion of this theorem still holds true even if $\vf_\ve^2$ converges to a generalized function distinct from $\delta_0$.

\begin{theorem}
\cite[p.~1538]{Malovichko}
For $\ve>0$ define
$$
\vf_\ve(q):=\dfrac{\sqrt{\alpha}}{\sqrt{\ve}} \vf\left(\dfrac{q-a_1}{\ve}\right)+\dfrac{\sqrt{\beta}}{\sqrt{\ve}} \vf\left(\dfrac{q-a_2}{\ve}\right),\quad q\in\mbR,
$$
where $0<\alpha,\beta<1$, $\alpha+\beta=1$, and $a_1<a_2$, and let $\{x_\ve(u,t),\; u\in\mbR,\; t\geqslant 0\}$ be the Harris flow formed by the solutions of the stochastic integral equation~\eqref{equation1} with $\vf_\ve$ instead of $\vf$. Then for any $n\in\mbN$ and for any $u_1,\ldots,u_n\in\mbR$ the weak convergence
$$
(x_\ve(u_1,\cdot),\ldots,x_\ve(u_n,\cdot)) \stackrel{w}{\longrightarrow} (x_0(u_1,\cdot),\ldots,x_0(u_n,\cdot)),\quad \ve\to 0+,
$$
takes place in the space $C([0;+\infty),\mbR^n)$.
\end{theorem}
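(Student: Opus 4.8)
The plan is to follow the same scheme as in~\cite{Dorogovtsev2004}: establish tightness of the family $\{(x_\ve(u_1,\cdot),\ldots,x_\ve(u_n,\cdot))\}_{\ve>0}$ and then identify every weak limit point with the $n$-point motion of the Arratia flow. The first step is to compute the covariance function $\Gamma_\ve$ of $x_\ve$ explicitly. Writing $\psi^\ve_a(q):=\ve^{-1/2}\vf((q-a)/\ve)$, expanding $\vf_\ve=\sqrt{\alpha}\,\psi^\ve_{a_1}+\sqrt{\beta}\,\psi^\ve_{a_2}$ and changing variables gives, with $d:=a_2-a_1>0$ and $\Gamma$ the covariance function attached to $\vf$,
\begin{equation*}
\Gamma_\ve(z)=\Gamma\!\left(\frac{z}{\ve}\right)+\sqrt{\alpha\beta}\left[\Gamma\!\left(\frac{z+d}{\ve}\right)+\Gamma\!\left(\frac{z-d}{\ve}\right)\right].
\end{equation*}
Since $\vf$, and hence $\Gamma$, has compact support, for all sufficiently small $\ve$ these three bumps are carried by disjoint neighbourhoods of $0$, $-d$ and $d$, and $\Gamma_\ve(0)=\Gamma(0)=1$. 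Letting $\ve\to 0+$ one obtains the pointwise limit $\Gamma_\ve(z)\to\1_{\{0\}}(z)+\sqrt{\alpha\beta}\,(\1_{\{-d\}}(z)+\1_{\{d\}}(z))$, a covariance function supported on the three-point set $\{-d,0,d\}$ of zero Lebesgue measure. The essential new feature compared with~\cite{Dorogovtsev2004} is therefore the pair of off-diagonal bumps at $\pm d$, and the core of the argument will be to show that they are asymptotically negligible.

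Tightness is immediate: by condition~(i) each coordinate $x_\ve(u_i,\cdot)$ is a standard Brownian motion, so every one-dimensional marginal family is tight in $C([0;+\infty),\mbR)$ and hence so is the vector family in $C([0;+\infty),\mbR^n)$. Passing to a weakly convergent subsequence with limit $(y_1,\ldots,y_n)$, I would identify it through the standard martingale characterization of the Arratia $n$-point motion: each $y_i$ is a standard Brownian motion, the order $y_i\leqslant y_j$ for $u_i\leqslant u_j$ is preserved, and each difference $w_{ij}:=y_j-y_i\geqslant 0$ is a continuous martingale with $\qv{w_{ij}}_t=2\int_0^t\1\{w_{ij}(s)>0\}\,ds$. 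Being a nonnegative martingale, $w_{ij}$ is absorbed at $0$, so the two coordinates coalesce upon meeting --- the defining property of the Arratia flow. The Brownian character of the coordinates and the preservation of order pass to the limit in the usual way, so the whole matter reduces to computing the limits of the pairwise quadratic variations.

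Fix a pair $i<j$ and set $z_\ve:=x_\ve(u_j,\cdot)-x_\ve(u_i,\cdot)\geqslant 0$. From $\qv{x_\ve(u_i,\cdot)}_t=t$ and condition~(iii),
\begin{equation*}
\qv{z_\ve}_t=2\int_0^t\bigl[1-\Gamma_\ve(z_\ve(s))\bigr]\,ds,
\end{equation*}
which, after inserting the formula for $\Gamma_\ve$, splits into the contribution $2\int_0^t[1-\Gamma(z_\ve(s)/\ve)]\,ds$ of the bump at the origin and the contribution $-2\sqrt{\alpha\beta}\int_0^t[\Gamma((z_\ve(s)+d)/\ve)+\Gamma((z_\ve(s)-d)/\ve)]\,ds$ of the bumps at $\pm d$. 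Near $z=0$ the diffusion coefficient $2[1-\Gamma(z/\ve)]$ of $z_\ve$ coincides with the one governing the single-bump flow considered in~\cite{Dorogovtsev2004}, so the analysis of the coalescence at the origin --- and the convergence of the first contribution to $2\int_0^t\1\{w_{ij}(s)>0\}\,ds$ --- is verbatim that of~\cite{Dorogovtsev2004}. It remains to prove that the second contribution tends to $0$ in probability.

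This last step is where the main difficulty lies and is the genuinely new ingredient. By the Dambis--Dubins--Schwarz theorem $z_\ve$ is a time-changed Brownian motion with speed $b_\ve(z):=2[1-\Gamma_\ve(z)]$, and although $b_\ve$ is depressed on the bumps at $\pm d$ it does not degenerate there: for small $\ve$ one has $\Gamma_\ve(z)\leqslant\sqrt{\alpha\beta}\leqslant\tfrac12$ whenever $z$ is near $\pm d$, whence $b_\ve(z)\geqslant 1$ uniformly in $\ve$ outside an $O(\ve)$-neighbourhood of the origin. Since $\Gamma$ is supported near $0$, the second contribution is bounded by the time $z_\ve$ spends within an $O(\ve)$-window of $\pm d$; undoing the time change and applying the occupation-density formula for the underlying Brownian motion bounds this time by $b_\ve^{-1}\cdot O(\ve)\cdot\sup_x L^x$, where $L^x$ is the Brownian local time run up to $\qv{z_\ve}_t\leqslant 2t$. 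As $\sup_x L^x$ has bounded expectation, the bound is $O(\ve)$ and vanishes. The crucial point --- and the reason the hypothesis $0<\alpha,\beta<1$ is used --- is precisely this uniform lower bound $b_\ve\geqslant 1$ near $\pm d$: the difference process is slowed but never brought to rest away from the diagonal, so no spurious coalescence is created at $\pm d$ and the off-diagonal bumps leave no trace in the limit.
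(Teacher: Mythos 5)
Your proof is correct, and its key step takes a genuinely different route from the paper's. The paper treats this statement as the special case of its Theorem~\ref{theorem4} handled by the machinery of Lemmas~\ref{lemma6}--\ref{lemma10}: it works entirely at the level of the \emph{limit} process, bounding the limiting bracket by $\int_s^t\Gamma_0(x(u_i,r)-x(u_j,r))\,dr$ with $\Gamma_0=\1_{\{0\}}+\sqrt{\alpha\beta}(\1_{\{-d\}}+\1_{\{d\}})$ (Lemma~\ref{lemma8}), then showing the limit difference spends zero Lebesgue time at the off-diagonal atoms $\pm d$ via the Dambis--Dubins--Schwarz time change, whose inverse is bi-Lipschitz away from the diagonal (Lemma~\ref{lemma10}), which in turn needs the a.s.\ convergence of the difference to zero (Lemma~\ref{lemma9}) and the uniform bound $\sup_{|z|\geqslant\delta}\Gamma_0(z)<1$ (Lemma~\ref{lemma6}). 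You instead kill the off-diagonal bumps at the \emph{prelimit} level: your decomposition $\Gamma_\ve(z)=\Gamma(z/\ve)+\sqrt{\alpha\beta}[\Gamma((z+d)/\ve)+\Gamma((z-d)/\ve)]$ is exact, your lower bound $2[1-\Gamma_\ve(z)]\geqslant 2(1-\sqrt{\alpha\beta})\geqslant 1$ near $\pm d$ (using $\sqrt{\alpha\beta}\leqslant\tfrac12$ from $\alpha+\beta=1$) is the correct prelimit analogue of~\eqref{equation7}, and the occupation-time/local-time estimate showing the time $z_\ve$ spends in the $O(\ve)$-window around $d$ is $O(\ve)$ in $L^1$ is sound, since $\tau_\ve(t)\leqslant 2t$ and $\E\sup_x L_{2t}^x<+\infty$. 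Your approach buys a quantitative rate and dispenses entirely with the analogues of Lemmas~\ref{lemma9} and~\ref{lemma10}; what it costs is generality --- it leans on the bumps being finitely many, compactly supported and uniformly separated, so it would not extend to the paper's setting of a general singular measure $\nu$ with countably many atoms, which is precisely why the paper develops the limit-level argument. One presentational caveat: your appeal to~\cite{Dorogovtsev2004} being ``verbatim'' should be phrased as reusing the \emph{scheme} applied to the identity $\qv{z_\ve}_t=2\int_0^t[1-\Gamma(z_\ve(s)/\ve)]\,ds+o_{L^1}(1)$, since $z_\ve$ has a different law from the difference process of the single-bump flow; with that reading, absorption of the nonnegative limit martingale plus the bounds $2\int_0^t\1\{w_{ij}(s)>\delta\}\,ds\leqslant\qv{w_{ij}}_t\leqslant 2t$ identify $\qv{w_{ij}}_t=2\int_0^t\1\{w_{ij}(s)>0\}\,ds$ exactly as you claim.
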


Note that in this case for the covariance function $\Gamma_\ve$ of the Harris flow $x_\ve$ we have
$$
\forall\, z\in\mbR:\quad \Gamma_\ve(z) \longrightarrow \sqrt{\alpha\beta} \cdot \1_{\{-b\}}(z)+\1_{\{0\}}(z)+ \sqrt{\alpha\beta} \cdot \1_{\{+b\}}(z),\quad \ve\to 0+,
$$
where $b:=a_2-a_1$, and also
\begin{equation}
\label{equation4}
\vf_\ve^2 \longrightarrow \alpha\delta_{a_1}+\beta\delta_{a_2},\quad \ve\to 0+,
\end{equation}
in the sense of generalized functions.

Here we show that the proof presented in~\cite{Malovichko} can be extended to the case when the right-hand side of~\eqref{equation4} is replaced by a discrete probability measure on the real line satisfying some mild conditions. To be more precise, let $\nu$ be an arbitrary finite singular measure on the real line having at least one atom, i.~e. such that
$$
\nu^2(\Delta)>0,
$$
where
\begin{gather*}
\nu^2:=\nu \otimes \nu
\intertext{and}
\Delta:=\{\vec{q}=(q_1,q_2)\in\mbR^2 \mid q_1=q_2\}.
\end{gather*}
Suppose that the function $\vf$ considered above is additionally non-decreasing on $(-\infty;0]$ and is non-increasing on $[0;+\infty)$ and that $\vf_\ve$ is defined by~\eqref{equation2}. Let us set
$$
\psi_\ve(z):=c_\ve \int\limits_\mbR \varphi_\ve(z-q)\,\nu(dq),\quad z\in\mbR,
$$
where the constant $c_\ve>0$ is chosen to be such that
$$
\int\limits_\mbR \psi_\ve^2(z)\,dz=1.
$$
It is clear that
\begin{gather*}
c_\ve=\left[\iint\limits_{\mbR^2} \Phi_\ve(q_1-q_2)\,\nu^2(dq_1dq_2)\right]^{-1/2},
\intertext{where}
\Phi_\ve(z):=\int\limits_\mbR \vf_\ve(z+q)\vf_\ve(q)\,dq,\quad z\in\mbR,
\end{gather*}
and also
$$
\psi_\ve\in C^\infty(\mbR).
$$
For $\ve>0$ let $\{x_\ve(u,t),\; u\in\mbR,\; t\geqslant 0\}$ be the Harris flow formed by the solutions of the stochastic integral equation~\eqref{equation1} with $\psi_\ve$ instead of $\vf$. The covariance functions of these Harris flows are given by
$$
\Gamma_\ve(z):=\int\limits_\mbR \psi_\ve(z+q)\psi_\ve(q)\,dq,\quad z\in\mbR.
$$

The main result of this note is the following theorem.

\begin{theorem}
\label{theorem4}
For any $n\in\mbN$ and for any $u_1,\ldots,u_n\in\mbR$ the weak convergence
$$
(x_\ve(u_1,\cdot),\ldots,x_\ve(u_n,\cdot))\stackrel{w}{\longrightarrow} (x_0(u_1,\cdot),\ldots,x_0(u_n,\cdot)),\quad \ve\to 0+,
$$
takes place in the space $C([0;+\infty),\mbR^n)$.
\end{theorem}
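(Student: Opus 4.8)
The plan is to follow the scheme of \cite{Malovichko}: establish tightness, and then identify every weak limit point as the Arratia $n$-point motion. Since by condition~(i) each coordinate $x_\ve(u_i,\cdot)$ is, for every $\ve>0$, a standard Brownian motion started at $u_i$, its law is Wiener measure and does not depend on $\ve$; hence the family of laws of the vectors $(x_\ve(u_1,\cdot),\dots,x_\ve(u_n,\cdot))$ is tight in $C([0;+\infty),\mbR^n)$, and it suffices to show that each limit point $X=(X_1,\dots,X_n)$ is the Arratia $n$-point motion. Such a limit automatically has Brownian coordinates and, by passing condition~(ii) to the limit, is ordered: $X_1\leqslant\cdots\leqslant X_n$.

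I would first record the asymptotics of the covariances. Writing $\Phi:=\Phi_1$, a change of variables gives $\Phi_\ve(z)=\Phi(z/\ve)$, so $c_\ve^{-2}=\iint_{\mbR^2}\Phi_\ve(q_1-q_2)\,\nu^2(dq_1dq_2)$ and, since $\Phi$ is supported near the origin, the off-diagonal part disappears and $c_\ve^{-2}\to\nu^2(\Delta)$; the hypothesis $\nu^2(\Delta)>0$ is exactly what guarantees $c_\ve\to c_0:=\nu^2(\Delta)^{-1/2}\in(0,+\infty)$. The same computation yields $\Gamma_\ve(z)\to\Gamma_0(z):=c_0^2\,\mu(\{z\})$ for every $z$, where $\mu$ is the image of $\nu^2$ under $(q_1,q_2)\mapsto q_1-q_2$; as $\mu$ is finite, $\Gamma_0$ is carried by the countable (hence Lebesgue-null) set $D:=\{z:\mu(\{z\})>0\}$, with $\Gamma_0(0)=1$ and, by Cauchy--Schwarz together with finiteness of $\nu$, $\Gamma_0(z)<1$ for $z\ne0$. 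Moreover, near the origin the diagonal atom dominates, $\Gamma_\ve(z)\approx\Phi(z/\ve)$, so the well $1-\Gamma_\ve$ has width of order $\ve$ and sharpens as $\ve\to0$, while near any $d\in D\setminus\{0\}$ the function $\Gamma_\ve$ is a single bump of height $\approx\Gamma_0(d)<1$ and width of order $\ve$.

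The core is the two-point motion. For a pair $u_i<u_j$ set $y_\ve:=x_\ve(u_j,\cdot)-x_\ve(u_i,\cdot)\geqslant0$; it is a continuous martingale with $\qv{y_\ve}_t=\int_0^t 2\,(1-\Gamma_\ve(y_\ve(s)))\,ds\leqslant2t$, so these martingales are $L^2$-bounded on compact intervals and uniformly integrable, and any weak limit $y_0$ is again a nonnegative continuous martingale. A nonnegative continuous martingale is absorbed at $0$ once it is reached --- this is precisely the coalescence of the limiting pair. With $\tau_0:=\inf\{t\geqslant0:y_0(t)=0\}$ it remains to prove $\qv{y_0}_t=2\,(t\wedge\tau_0)$, equivalently that $2\int_0^t\Gamma_\ve(y_\ve(s))\,ds$ converges to the time $y_0$ spends at $0$: on $\{y_\ve\leqslant\delta\}$ the sharp well makes $\Gamma_\ve\approx1$, and the slow diffusion there traps the process for a macroscopic time that accounts for the absorption, whereas on $\{y_\ve>\delta\}$ the covariance must wash out.

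Proving $\int_0^t\Gamma_\ve(y_\ve(s))\,\1\{y_\ve(s)>\delta\}\,ds\to0$ is the main obstacle and the only step that genuinely exceeds \cite{Malovichko}: there $D$ consisted of the three points $\{-b,0,b\}$ and the finitely many exceptional levels could be excised by hand, whereas here $D\setminus\{0\}$ may be countably infinite and even dense. The way around it, which I expect to be the hard part to make fully rigorous, is a two-scale estimate. Fix $\eta>0$; because $\sum_{d\in D\setminus\{0\}}\mu(\{d\})\leqslant\mu(\mbR)=\nu(\mbR)^2<\infty$, only finitely many bumps of $\Gamma_\ve$ reach height above $\eta$. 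On their union --- a set of Lebesgue measure $O(\ve)$ on which $1-\Gamma_\ve$ is bounded below, since the finitely many relevant heights $\Gamma_0(d)$ are all $<1$ --- the occupation-time formula together with a uniform-in-$\ve$ control of the local times of $y_\ve$ (available from $\qv{y_\ve}_t\leqslant2t$) shows the occupation time, and hence the integral, is $O(\ve)\to0$; off these bumps $\Gamma_\ve\leqslant\eta$, contributing at most $\eta t$. Letting first $\ve\to0$ and then $\eta\to0$ identifies $y_0$ as $\sqrt2$ times a Brownian motion absorbed at $0$, i.e. the two-point motion of the Arratia flow. The passage from two to $n$ points is then routine: by the consistency and Markov structure the Arratia $n$-point motion is determined, among ordered processes with Brownian marginals, by pairwise coalescence with independent motion before meeting, so the two-point result applied to successive coordinates yields convergence of the finite-dimensional distributions, which with the tightness above completes the proof exactly as in \cite{Dorogovtsev2004,Malovichko}.
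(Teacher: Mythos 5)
Your overall scheme (tightness via the $\ve$-independent Wiener marginals, then identification of limit points through the pairwise brackets) is the same as the paper's, but the step you yourself flag as the hard one contains a genuine gap, and it is exactly the point where the paper has to work hardest. In your two-scale estimate you must convert occupation \emph{time} of $y_\ve$ in the high-bump set $A_\ve:=\{\abs{z}\geqslant\delta \mid \Gamma_\ve(z)>\eta\}$ into occupation \emph{measure} in order to use local times: since $d\qv{y_\ve}_s=2(1-\Gamma_\ve(y_\ve(s)))\,ds$, this costs a factor $\bigl(2\inf_{A_\ve}(1-\Gamma_\ve)\bigr)^{-1}$ precisely on the set where $\Gamma_\ve$ is close to $1$. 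So you need $\sup_{\abs{z}\geqslant\delta}\Gamma_\ve(z)\leqslant 1-\rho$ uniformly in small $\ve$, and your justification --- ``the finitely many relevant heights $\Gamma_0(d)$ are all $<1$'' --- does not deliver it. Your Cauchy--Schwarz argument gives only the \emph{pointwise} bound $\Gamma_0(z)<1$ for $z\neq 0$; since $D$ may be infinite and even dense, this does not exclude $\sup_{\abs{z}\geqslant\delta}\Gamma_0(z)=1$, let alone a sequence $\ve_k\to 0$ and points $z_k$ with $\abs{z_k}\geqslant\delta$ and $\Gamma_{\ve_k}(z_k)\to 1$: a bump of $\Gamma_\ve$ aggregates the $\nu^2$-mass of an entire cluster of atom differences inside an $\ve$-window, so its height is not controlled by the finitely many limit heights $\Gamma_0(d)>\eta$. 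The uniform statement $\sup_{\abs{z}\geqslant\delta}\Gamma_0(z)<1$ is genuinely nontrivial: the paper devotes Lemma~\ref{lemma6} to it, deriving it by a bootstrap contradiction from $\lim_{\abs{z}\to+\infty}\Gamma_0(z)=0$ and the positive-definiteness inequality $\abs{\Gamma_0(x)-\Gamma_0(y)}\leqslant 2\sqrt{1-\Gamma_0(x-y)}$. Nothing in your sketch supplies this (and you would additionally need a transfer from $\Gamma_0$ to $\Gamma_\ve$ for small $\ve$, plus the uniform-in-$\ve$ bound on $\sup_a L_t^a(y_\ve)$, which you also leave unproved, though that part is more routine).

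It is also worth noting that the paper avoids any prelimit occupation estimate of the kind you attempt. It first passes the bound to the limit (Lemma~\ref{lemma8}: the limit bracket is dominated by $\int_s^t\Gamma_0(x(u_i,r)-x(u_j,r))\,dr$), then shows the limit differences tend to $0$ (Lemma~\ref{lemma9}), and finally proves in Lemma~\ref{lemma10} that the limit difference spends zero Lebesgue time in $D\setminus\{0\}$ by a time-change argument: on $[0;\sigma_z]$ the Dambis--Dubins--Schwarz clock $\tau$ is bi-Lipschitz --- with lower Lipschitz constant coming again from~\eqref{equation7} --- so the null occupation of a countable set by the Brownian motion $\beta$ pulls back to a null set of times. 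So even if you repaired your gap, you would in effect first have to reprove Lemma~\ref{lemma6}; and once~\eqref{equation7} is available, the paper's limit-level route is cleaner than your prelimit local-time route, which would still require the uniform local-time control and a careful interchange of the limits $\ve\to 0$ and $\eta\to 0$ to conclude $\qv{y_0}_t=2(t\wedge\tau_0)$.
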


Following~\cite{Malovichko} we divide the proof into several lemmas. We repeat the considerations of~\cite{Malovichko}, where necessary, as concisely as possible and omit the proofs which are similar to those of that paper. The main difference lies in the proof of Lemma~\ref{lemma10}, since the idea used in the proof of its analogue~\cite[Lemma~6]{Malovichko} cannot be applied to our case. Our proof of Lemma~\ref{lemma10} is based on additional Lemmas~\ref{lemma6} and~\ref{lemma9}.

Before proceeding to the proof of the main result, however, we prove an analogue of relations~\eqref{equation3} and~\eqref{equation4}. To formulate it, let $\nu_0$ be a discrete probability measure on the real line defined by
$$
\nu_0(A):=\dfrac{\sum\limits_{k\colon a_k\in A} (\nu(\{a_k\}))^2} {\sum\limits_{k} (\nu(\{a_k\}))^2},\quad A\in\mcB(\mbR),
$$
where $\{a_k,\; k\geqslant 1\}$ are the atoms of the measure $\nu$ and $\mcB(\mbR)$ is the Borel $\sigma$-field of the real line.

\begin{proposition}
For every function $f\in C_0^\infty(\mbR)$ we have
$$
\lim_{\ve\to 0+} \int\limits_\mbR f(z)\psi_\ve^2(z)\,dz=\int\limits_\mbR f(z)\nu_0(dz).
$$
\end{proposition}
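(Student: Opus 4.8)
The plan is to reduce the integral to a ratio of two double integrals against $\nu^2=\nu\otimes\nu$, both of which can be handled by dominated convergence. First I would expand the square $\psi_\ve^2$ and apply Fubini's theorem (legitimate since $\vf_\ve$ is continuous with compact support and $\nu$ is finite) to obtain
\begin{equation*}
\int_\mbR f(z)\psi_\ve^2(z)\,dz=\dfrac{\displaystyle\iint_{\mbR^2} g_\ve(q_1,q_2)\,\nu^2(dq_1dq_2)}{\displaystyle\iint_{\mbR^2}\Phi_\ve(q_1-q_2)\,\nu^2(dq_1dq_2)},
\end{equation*}
where $g_\ve(q_1,q_2):=\int_\mbR f(z)\vf_\ve(z-q_1)\vf_\ve(z-q_2)\,dz$. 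Here the equality uses the normalization $\int_\mbR\psi_\ve^2=1$, and one checks by the substitution $z\mapsto z+q_2$ that $\Phi_\ve(q_1-q_2)=g_\ve(q_1,q_2)$ in the special case $f\equiv 1$, so that the denominator is precisely $c_\ve^{-2}$.

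The heart of the matter is the pointwise behaviour of $g_\ve$ as $\ve\to 0+$. Writing $\vf_\ve(x)=\ve^{-1/2}\vf(x/\ve)$ and substituting shows that $\Phi_\ve(z)=\Phi_1(z/\ve)$, where $\Phi_1(w):=\int_\mbR\vf(w+r)\vf(r)\,dr$ is the autocorrelation of $\vf$; it is continuous, compactly supported, and satisfies $\Phi_1(0)=1$, whence $\Phi_\ve(q_1-q_2)\to\1_\Delta(q_1,q_2)$ pointwise on $\mbR^2$. For $g_\ve$ I would distinguish the off-diagonal and diagonal cases: if $q_1\neq q_2$, the supports of $\vf_\ve(\cdot-q_1)$ and $\vf_\ve(\cdot-q_2)$ are disjoint once $\ve$ is small enough (here the compactness of $\operatorname{supp}\vf$ is essential), so $g_\ve(q_1,q_2)=0$ eventually; on the diagonal, $g_\ve(q,q)=\int_\mbR f(q+\ve r)\vf^2(r)\,dr\to f(q)$ by continuity of $f$ and $\|\vf\|_2=1$. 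Thus $g_\ve(q_1,q_2)\to f(q_1)\1_\Delta(q_1,q_2)$ pointwise, and by the Cauchy--Schwarz inequality $|g_\ve|\leqslant\|f\|_\infty$, a constant which is $\nu^2$-integrable. This diagonal analysis is the main (though not deep) obstacle, the point being that $g_\ve$ does not converge to a continuous function but concentrates on the diagonal; note that the monotonicity hypotheses on $\vf$ are not needed for this proposition.

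Finally I would apply the dominated convergence theorem to numerator and denominator separately. The denominator tends to $\nu^2(\Delta)=\sum_k(\nu(\{a_k\}))^2=:S$, which is positive by the standing hypothesis, so the limit of the ratio is well-defined. For the numerator, Fubini's theorem gives
\begin{equation*}
\iint_{\mbR^2} f(q_1)\1_\Delta(q_1,q_2)\,\nu^2(dq_1dq_2)=\int_\mbR f(q)\,\nu(\{q\})\,\nu(dq)=\sum_k f(a_k)(\nu(\{a_k\}))^2,
\end{equation*}
since only the atoms of $\nu$ contribute to the diagonal. Dividing the two limits, the ratio converges to $\tfrac{1}{S}\sum_k f(a_k)(\nu(\{a_k\}))^2=\int_\mbR f(z)\,\nu_0(dz)$, which is exactly the claimed limit.
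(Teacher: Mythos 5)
Your proposal is correct and follows essentially the same route as the paper: the same Fubini reduction to double integrals against $\nu^2$, the same pointwise limits $\Phi_\ve(q_1-q_2)\to\1\{q_1=q_2\}$ and $g_\ve(q_1,q_2)\to f(q_1)\1\{q_1=q_2\}$ with the bound $\abs{g_\ve}\leqslant\norm{f}_\infty$, and two applications of dominated convergence (to $c_\ve^2$ and to the numerator). Your explicit off-diagonal/diagonal case analysis and the remark that the monotonicity of $\vf$ is not needed here merely spell out details the paper leaves implicit.
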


\begin{proof}
By Fubini's theorem we have
\begin{gather*}
\lim_{\ve\to 0+} \int\limits_\mbR f(z)\psi_\ve^2(z)\,dz=\\
=\lim_{\ve\to 0+} \left[c_\ve^2 \iint\limits_{\mbR^2} \left(\int\limits_\mbR f(z) \varphi_\ve(z-q_1) \varphi_\ve(z-q_2)\,dz\right)\nu^2(dq_1dq_2)\right].
\end{gather*}
However, by the dominated convergence theorem
$$
\lim_{\ve\to 0+} c_\ve^2=\left[\iint\limits_{\mbR^2} \left(\lim_{\ve\to 0+} \Phi_\ve(q_1-q_2)\right) \nu^2(dq_1dq_2)\right]^{-1}= \dfrac{1}{\nu^2(\Delta)}.
$$
Moreover, since for any $q_1,q_2\in\mbR$ we have
$$
\abs{\int\limits_\mbR f(z) \varphi_\ve(z-q_1) \varphi_\ve(z-q_2)\,dz}\leqslant \norm{f}_\infty \cdot \Phi_\ve(q_1-q_2)\leqslant \norm{f}_\infty<+\infty,
$$
where
$$
\norm{f}_\infty:=\max_{z\in\mbR} \abs{f(z)},
$$
by the same theorem
$$
\lim_{\ve\to 0+} \int\limits_\mbR f(z) \varphi_\ve(z-q_1) \varphi_\ve(z-q_2)\,dz= f(q_1)\1\{q_1=q_2\}.
$$
It remains to note that
\[
\dfrac{1}{\nu^2(\Delta)} \iint\limits_{\mbR^2} f(q_1) \1\{q_1=q_2\}\,\nu^2(dq_1dq_2)=\int\limits_\mbR f(z)\,\nu_0(dz).
\qedhere
\]
\end{proof}

Now let us set
$$
\Delta_z:=\{\vec{q}=(q_1,q_2)\in\mbR^2 \mid q_1-q_2=z\},\quad z\in\mbR.
$$
Then it is easy to see that for every $z\in\mbR$ we have
$$
\lim_{\ve\to 0+} \Gamma_\ve(z)=\Gamma_0(z),
$$
where the function $\Gamma_0$ is given by
$$
\Gamma_0(z):=\dfrac{\nu^2(\Delta_z)}{\nu^2(\Delta_0)}.
$$

Moreover, the set
\begin{equation}
\label{equation5}
D:=\{z\in\mbR \mid \Gamma_0(z)>0\}
\end{equation}
is countable, since the family $\{\Delta_z,\; z\in\mbR\}$ is a partition of $\mbR^2$ and $\nu^2(\mbR^2)<+\infty$.

\begin{lemma}
\label{lemma6}
The following assertions hold true:
\begin{gather}
\label{equation6}
\lim\limits_{\abs{z}\to +\infty} \Gamma_0(z)=0,\\
\label{equation7}
\forall\, \delta>0:\quad \sup\limits_{\abs{z}\geqslant\delta} \Gamma_0(z)<1.
\end{gather}
\end{lemma}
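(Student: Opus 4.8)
The plan is to reduce everything to the atomic part of $\nu$ and then to exploit a single summability identity. Writing $m(x):=\nu(\{x\})$, so that $m$ is supported on the countable set of atoms $\{a_k\}$ and lies in $\ell^1\cap\ell^2$ (indeed $\sum_k m(a_k)\leqslant\nu(\mbR)<+\infty$ and $\sum_k m(a_k)^2=\nu^2(\Delta_0)<+\infty$), Fubini's theorem gives for every $z\in\mbR$
$$
\nu^2(\Delta_z)=\int_\mbR \nu(\{q+z\})\,\nu(dq)=\sum_{x} m(x)\,m(x+z),
$$
the continuous singular part of $\nu$ contributing nothing since it charges no point. Thus $\nu^2(\Delta_z)$ is an $\ell^2$ autocorrelation of the mass sequence, and $\Gamma_0(z)=\nu^2(\Delta_z)/\nu^2(\Delta_0)$ with $\Gamma_0(0)=1$.

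First I would record the pointwise bound. By the Cauchy--Schwarz inequality applied to the sequences $(m(x))_x$ and $(m(x+z))_x$, which share the same $\ell^2$-norm, one obtains $\nu^2(\Delta_z)\leqslant\nu^2(\Delta_0)$, i.e. $\Gamma_0(z)\leqslant 1$ for all $z$. Equality can hold only if these sequences are proportional, and since they have equal norms and non-negative entries this forces $m(x+z)=m(x)$ for all $x$. For $z\neq 0$ this would make $m$ periodic with period $z$, so any atom $a$ would generate infinitely many atoms $a+nz$ ($n\in\mbN$), each of mass $m(a)>0$, contradicting $\nu(\mbR)<+\infty$. Hence $\Gamma_0(z)<1$ strictly for every $z\neq 0$.

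The decisive observation is that $\{\Delta_z,\;z\in\mbR\}$ partitions $\mbR^2$, so that summing the autocorrelation over the countable support $D$ collects each pair of atoms exactly once:
$$
\sum_{z\in D}\Gamma_0(z)=\frac{1}{\nu^2(\Delta_0)}\sum_{z\in D}\nu^2(\Delta_z)= \frac{1}{\nu^2(\Delta_0)}\Big(\sum_k\nu(\{a_k\})\Big)^2<+\infty.
$$
Both assertions follow from this one inequality. For~\eqref{equation6}, summability shows that for each $\ve>0$ the set $\{z\in D:\Gamma_0(z)>\ve\}$ is finite, hence bounded; since $\Gamma_0$ vanishes off $D$, this gives $\Gamma_0(z)\leqslant\ve$ for all sufficiently large $\abs{z}$, i.e. $\Gamma_0(z)\to 0$ as $\abs{z}\to+\infty$. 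For~\eqref{equation7}, fix $\delta>0$ and note that $F:=\{z\in D:\abs{z}\geqslant\delta,\;\Gamma_0(z)\geqslant 1/2\}$ is finite; on $F$ every value is strictly below $1$ by the previous paragraph (as $z\neq 0$), so $\max_F\Gamma_0<1$, while every remaining $z$ with $\abs{z}\geqslant\delta$ satisfies $\Gamma_0(z)<1/2$. Therefore $\sup_{\abs{z}\geqslant\delta}\Gamma_0(z)\leqslant\max\bigl(1/2,\max_F\Gamma_0\bigr)<1$.

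The main obstacle is precisely~\eqref{equation7}: the pointwise strict inequality $\Gamma_0(z)<1$ for $z\neq 0$ does not by itself rule out $\sup_{\abs{z}\geqslant\delta}\Gamma_0(z)=1$, since $D$ may accumulate inside a bounded region. It is the finiteness of $\sum_{z\in D}\Gamma_0(z)$ that forbids any sequence of admissible $z$ along which $\Gamma_0$ tends to $1$, thereby upgrading the pointwise bound to a uniform gap.
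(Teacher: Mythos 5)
Your proof is correct, but it follows a genuinely different route from the paper's. The paper proves~\eqref{equation6} softly, by bounding $\Gamma_0(z)\leqslant\frac{1}{\nu^2(\Delta_0)}\iint_{\mbR^2}\Phi_\ve(z+q_1-q_2)\,\nu^2(dq_1dq_2)$ and applying dominated convergence, and it proves~\eqref{equation7} by contradiction using only abstract positive definiteness: the inequality $\abs{\Gamma_0(x)-\Gamma_0(y)}\leqslant 2\sqrt{1-\Gamma_0(x-y)}$ for normalized non-negative definite functions (cited from Kuo) lets it iterate a near-maximizer $z_1$ with $\Gamma_0(z_1)>1/2$ into a sequence of sums $z_1+\ldots+z_n\to+\infty$ along which $\Gamma_0$ stays above $1/2$, contradicting~\eqref{equation6}. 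That argument uses nothing about $\Gamma_0$ beyond being non-negative definite, equal to $1$ at $0$, and vanishing at infinity. You instead exploit the concrete structure of $\Gamma_0$ as the normalized autocorrelation of the atomic mass sequence $m\in\ell^1\cap\ell^2$: the identity $\nu^2(\Delta_z)=\sum_x m(x)m(x+z)$ (correctly justified, including the observation that the continuous singular part charges no line $\Delta_z$), Cauchy--Schwarz with its equality case ruled out by the finiteness of $\nu$ (a periodic mass function would have infinite total mass), and the partition identity $\sum_{z\in D}\Gamma_0(z)=\bigl(\sum_k\nu(\{a_k\})\bigr)^2/\nu^2(\Delta_0)<+\infty$, whose countable-additivity justification is sound since $D$ is countable and the lines $\Delta_z$ are pairwise disjoint. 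Your approach buys strictly more than the lemma asserts: summability of $\Gamma_0$ over $D$, hence finiteness of every level set $\{\Gamma_0>\ve\}$, and the strict pointwise bound $\Gamma_0(z)<1$ for all $z\neq 0$; it also avoids both the smoothing step and the external positive-definiteness inequality, so it is more elementary and self-contained. The trade-off is generality: the paper's contradiction argument for~\eqref{equation7} would survive any limit covariance with the three abstract properties above, whereas yours is tied to $\Gamma_0$ arising from a purely atomic correlation. Your closing remark correctly identifies the crux --- pointwise strictness alone does not preclude $\sup_{\abs{z}\geqslant\delta}\Gamma_0(z)=1$ along an accumulating sequence in $D$ --- and your summability identity closes that gap just as effectively as the paper's iteration does.
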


\begin{proof}
To prove \eqref{equation6} note that for any $\ve>0$
$$
0\leqslant\Gamma_0(z)\leqslant\dfrac{1}{\nu^2(\Delta_0)} \iint\limits_{\mbR^2} \Phi_\ve(z+q_1-q_2)\,\nu^2(dq_1dq_2)
$$
and that by the dominated convergence theorem the last expression converges to zero as $\abs{z}\to +\infty$.

Now suppose that~\eqref{equation7} is false, i.~e. that there exists some $\delta_0>0$ such that
\begin{equation}
\label{equation8}
\sup\limits_{\abs{z}\geqslant\delta_0} \Gamma_0(z)=1.
\end{equation}
It means, in particular, that we can find some $z_1\geqslant\delta_0$ such that
$$
\Gamma_0(z_1)>\dfrac{1}{2}.
$$

Since the function $\Gamma_0$ is non-negative definite and $\Gamma_0(0)=1$, we have (e.~g., see~\cite[p.~22]{Kuo})
$$
\forall\, x,y\in\mbR:\quad \abs{\Gamma_0(x)-\Gamma_0(y)}\leqslant 2\sqrt{1-\Gamma_0(x-y)},
$$
and, in particular, for any $z\in\mbR$
\begin{equation}
\label{equation9}
\abs{\Gamma_0(z_1+z)-\Gamma_0(z_1)}\leqslant 2\sqrt{1-\Gamma_0(z)}.
\end{equation}
Using~\eqref{equation8} and~\eqref{equation9} and the symmetry of $\Gamma_0$ we can choose $z_2\geqslant \delta_0$ such that
\begin{gather*}
\abs{\Gamma_0(z_1+z_2)-\Gamma_0(z_1)}<\Gamma_0(z_1)-\dfrac{1}{2}
\intertext{and so}
\Gamma_0(z_1+z_2)>\dfrac{1}{2}.
\end{gather*}

Proceeding further in this way we obtain a sequence $\{z_n\}_{n=1}^\infty$ such that
\begin{gather*}
z_n\geqslant\delta_0,\quad n\geqslant 1,\\
\Gamma_0(z_1+\ldots+z_n)>\dfrac{1}{2},
\end{gather*}
which contradicts~\eqref{equation6}.
\end{proof}

Now fix arbitrary $n\in\mbN$ and $u_1,\ldots,u_n\in\mbR$, $u_1<\ldots<u_n$, and consider the family
$$
\{\vec{x}_\ve=(x_\ve(u_1,\cdot),\ldots,x_\ve(u_n,\cdot)),\; \ve>0\}
$$
of random elements in the space $C([0;+\infty),\mbR^n)$ endowed with the distance
\begin{gather*}
\rho(\vec{f},\vec{g}):=\sum_{i=1}^n \sum_{k=1}^\infty \dfrac{1}{2^k} \dfrac{\max\limits_{0\leqslant t\leqslant k} \abs{f_i(t)-g_i(t)}} {1+\max\limits_{0\leqslant t\leqslant k} \abs{f_i(t)-g_i(t)}},\\
\vec{f}=(f_1,\ldots,f_n)\in C([0;+\infty),\mbR^n),\\
\vec{g}=(g_1,\ldots,g_n)\in C([0;+\infty),\mbR^n).
\end{gather*}
Since all stochastic processes $\{x_\ve(u_i,t),\; t\geqslant 0\}$, $1\leqslant i\leqslant n$, are Wiener processes, thus having the same distribution in the complete separable metric space $C([0;+\infty),\mbR)$, using Prohorov's theorem one can easily show that the family $\{\vec{x}_\ve,\; \ve>0\}$ is weakly relatively compact. Let $\vec{x}=(x(u_1,\cdot),\ldots,x(u_n,\cdot))$ be one of its limit points (as $\ve\to 0+$).

\begin{lemma}
The $n$-dimensional stochastic process $\{\vec{x}(t),\; t\geqslant 0\}$ is a martingale (with respect to its own filtration).
\end{lemma}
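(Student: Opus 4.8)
The plan is to transfer the martingale property of the approximating $n$-point motions $\vec{x}_\ve$ to the limit $\vec{x}$. First I would record that, since each coordinate $x_\ve(u_i,\cdot)$ is a Wiener process with respect to the common filtration $(\mcF_t)_{t\geqslant 0}$ of the flow $x_\ve$, the vector process $\vec{x}_\ve$ is an $(\mcF_t)$-martingale. To verify that $\vec{x}$ is a martingale with respect to $\mcF_t^{\vec{x}}:=\sigma\{\vec{x}(r),\; 0\leqslant r\leqslant t\}$, it suffices to check, for all $0\leqslant s_1<\ldots<s_m=s<t$ and every bounded continuous $h\colon\mbR^{nm}\to\mbR$, the orthogonality relation
$$
\E\bigl[(\vec{x}(t)-\vec{x}(s))\,h(\vec{x}(s_1),\ldots,\vec{x}(s_m))\bigr]=0,
$$
because such variables $h(\vec{x}(s_1),\ldots,\vec{x}(s_m))$, as $m$, the times $s_j$ and $h$ vary, generate $\mcF_s^{\vec{x}}$ and form a measure-determining family, while $\E\abs{\vec{x}(t)}<+\infty$ since each coordinate is a Brownian motion.

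For each $\ve>0$ the corresponding relation holds: as $\E[\vec{x}_\ve(t)-\vec{x}_\ve(s)\mid\mcF_s]=0$ and $h(\vec{x}_\ve(s_1),\ldots,\vec{x}_\ve(s_m))$ is $\mcF_s$-measurable, conditioning gives
$$
\E\bigl[(\vec{x}_\ve(t)-\vec{x}_\ve(s))\,h(\vec{x}_\ve(s_1),\ldots,\vec{x}_\ve(s_m))\bigr]=0.
$$
Let $\ve_k\to 0+$ be a sequence along which $\vec{x}_{\ve_k}\stackrel{w}{\longrightarrow}\vec{x}$. The functional $\vec{f}\mapsto(\vec{f}(t)-\vec{f}(s))\,h(\vec{f}(s_1),\ldots,\vec{f}(s_m))$ is continuous on $C([0;+\infty),\mbR^n)$, so by the continuous mapping theorem the random vectors $(\vec{x}_{\ve_k}(t)-\vec{x}_{\ve_k}(s))\,h(\vec{x}_{\ve_k}(s_1),\ldots,\vec{x}_{\ve_k}(s_m))$ converge weakly to the corresponding functional of $\vec{x}$.

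The hard part — and the only place a quantitative estimate enters — is upgrading this weak convergence to convergence of expectations, since the functional above is unbounded. I would obtain this from uniform integrability: because $h$ is bounded and each $x_\ve(u_i,\cdot)$ is a standard Brownian motion, the increments satisfy $\E\abs{x_\ve(u_i,t)-x_\ve(u_i,s)}^2=t-s$ uniformly in $\ve$, whence
$$
\sup_k \E\bigl|(\vec{x}_{\ve_k}(t)-\vec{x}_{\ve_k}(s))\,h(\vec{x}_{\ve_k}(s_1),\ldots,\vec{x}_{\ve_k}(s_m))\bigr|^2\leqslant \norm{h}_\infty^2\, n\,(t-s)<+\infty.
$$
Being bounded in $L_2$, the family is uniformly integrable, so weak convergence yields convergence of expectations. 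As the left-hand sides vanish identically, the limit orthogonality relation follows; letting $m$, the times $s_j$ and $h$ vary then gives $\E[\vec{x}(t)-\vec{x}(s)\mid\mcF_s^{\vec{x}}]=0$, i.e. $\vec{x}$ is a martingale.
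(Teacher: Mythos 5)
Your proof is correct and follows essentially the same route as the paper: the paper omits the proof of this lemma, citing \cite[Lemma~2]{Malovichko}, where the martingale property of the limit is established in exactly this way --- passing the orthogonality relations $\E\bigl[(\vec{x}_\ve(t)-\vec{x}_\ve(s))\,h(\vec{x}_\ve(s_1),\ldots,\vec{x}_\ve(s_m))\bigr]=0$ to the limit along the weakly convergent subsequence, with uniform integrability supplied by the uniform $L_2$ bounds on increments that hold because every coordinate $x_\ve(u_i,\cdot)$ is a standard Wiener process. Your uniform-integrability estimate and the monotone-class step identifying $\mcF_s^{\vec{x}}$ from bounded continuous cylinder functionals are both sound, so the argument is complete.
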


\begin{proof}
The proof of this lemma is identical to that of~\cite[Lemma~2]{Malovichko} and is therefore omitted.
\end{proof}

\begin{lemma}
\label{lemma8}
With probability one for any $i,j\in \{1,\ldots,n\}$ we have
\begin{gather*}
0\leqslant\jqv{x(u_i,\cdot)}{x(u_j,\cdot)}_t-\jqv{x(u_i,\cdot)}{x(u_j,\cdot)}_s\leqslant\int\limits_s^t \Gamma_0(x(u_i,r)-x(u_j,r))\,dr,\\
0\leqslant s\leqslant t<+\infty.
\end{gather*}
\end{lemma}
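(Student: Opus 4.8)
The plan is to prove the two inequalities separately, reducing each to a (super/sub)martingale statement for the limit process that is inherited from the exact identity $\jqv{x_\ve(u_i,\cdot)}{x_\ve(u_j,\cdot)}_t=\int_0^t\Gamma_\ve(x_\ve(u_i,r)-x_\ve(u_j,r))\,dr$, valid for the prelimit Harris flows by condition~(iii). Write $d_{ij}(r):=x(u_i,r)-x(u_j,r)$ and $d^\ve_{ij}(r):=x_\ve(u_i,r)-x_\ve(u_j,r)$. The left-hand inequality is the easy one: since $\Gamma_\ve\geqslant 0$, the process $x_\ve(u_i,\cdot)x_\ve(u_j,\cdot)$ is a submartingale for every $\ve$, and this property is preserved under the weak convergence $\vec{x}_\ve\to\vec{x}$ (the products have second moments bounded uniformly in $\ve$ on compact time intervals, which supplies the uniform integrability needed to pass the conditional-expectation inequality to the limit). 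Hence $x(u_i,\cdot)x(u_j,\cdot)$ is a submartingale, its continuous compensator $\jqv{x(u_i,\cdot)}{x(u_j,\cdot)}$ is therefore nondecreasing, and the lower bound follows.

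The substance is the upper bound. The key analytic fact I would establish first is that $\Gamma_0$ dominates the family $\{\Gamma_\ve\}$ in the relaxed sense
$$
\limsup_{\ve\to 0+,\ z'\to z}\Gamma_\ve(z')\leqslant\Gamma_0(z),\qquad z\in\mbR.
$$
This follows from the representation $\Gamma_\ve(z)=c_\ve^2\iint_{\mbR^2}\Phi_\ve(z+q_1-q_2)\,\nu^2(dq_1dq_2)$ together with three facts: $0\leqslant\Phi_\ve\leqslant\Phi_\ve(0)=1$; the support of $\Phi_\ve$ lies in an $O(\ve)$-neighbourhood of the origin (as $\vf_\ve$ has support of width $O(\ve)$); and $c_\ve^2\to\nu^2(\Delta_0)^{-1}$. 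Indeed, as $z'\to z$ and $\ve\to 0+$ the integrand is supported on pairs with $q_1-q_2$ in a shrinking interval about $-z$, so continuity from above of the finite measure $\nu^2$ under the map $(q_1,q_2)\mapsto q_1-q_2$ yields the bound $\nu^2(\Delta_{-z})/\nu^2(\Delta_0)=\Gamma_0(z)$. The very same computation shows that $\Gamma_0$ is upper semicontinuous and bounded by $1$. The consequence I actually use is that for every bounded continuous $g\geqslant\Gamma_0$ one has $\sup_{z\in\mbR}(\Gamma_\ve(z)-g(z))^+\to 0$ as $\ve\to 0+$: otherwise one would find $\ve_k\to 0$ and $z_k$ with $\Gamma_{\ve_k}(z_k)\geqslant g(z_k)+\eta$; the relaxed bound forces $(z_k)$ to be bounded (its tails are controlled by the tightness of $\nu^2$), and passing to a convergent subsequence $z_k\to z^*$ contradicts $\Gamma_0(z^*)\geqslant\limsup_k\Gamma_{\ve_k}(z_k)\geqslant g(z^*)+\eta$.

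Granting this, fix a bounded continuous $g\geqslant\Gamma_0$. For $0\leqslant s\leqslant t$ and any bounded nonnegative functional $\xi$ of the trajectory up to time $s$, the martingale identity for the prelimit gives
$$
\E\big[(x_\ve(u_i,t)x_\ve(u_j,t)-x_\ve(u_i,s)x_\ve(u_j,s))\,\xi_\ve\big]=\E\Big[\xi_\ve\int_s^t\Gamma_\ve(d^\ve_{ij}(r))\,dr\Big]\leqslant\E\Big[\xi_\ve\int_s^t g(d^\ve_{ij}(r))\,dr\Big]+(t-s)\norm{\xi}_\infty\sup_z(\Gamma_\ve-g)^+,
$$
where $\xi_\ve$ denotes the same functional evaluated on $\vec{x}_\ve$. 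Every term is a bounded continuous functional of the path (recall $g$ is bounded continuous), so letting $\ve\to 0+$ along the subsequence defining $\vec{x}$ — using the uniform integrability of the products as above and the Skorokhod representation to make the convergence almost sure — shows that $x(u_i,\cdot)x(u_j,\cdot)-\int_0^\cdot g(d_{ij}(r))\,dr$ is a supermartingale. Subtracting the martingale $x(u_i,\cdot)x(u_j,\cdot)-\jqv{x(u_i,\cdot)}{x(u_j,\cdot)}$, the process $\jqv{x(u_i,\cdot)}{x(u_j,\cdot)}_\cdot-\int_0^\cdot g(d_{ij}(r))\,dr$ is a continuous supermartingale of bounded variation, hence nonincreasing; this is precisely $\jqv{x(u_i,\cdot)}{x(u_j,\cdot)}_t-\jqv{x(u_i,\cdot)}{x(u_j,\cdot)}_s\leqslant\int_s^t g(d_{ij}(r))\,dr$. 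Finally I would take continuous $g_m\downarrow\Gamma_0$ (for instance the Lipschitz sup-convolutions $g_m(z)=\sup_{z'}(\Gamma_0(z')-m\abs{z-z'})$, which decrease to the upper semicontinuous $\Gamma_0$) and let $m\to\infty$, so that dominated convergence replaces $g$ by $\Gamma_0$ on the right-hand side.

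The main obstacle is the structural estimate of the second paragraph: everything else is soft (submartingale inheritance, a bounded-variation supermartingale argument, and monotone approximation), but the inequality $\sup_z(\Gamma_\ve-g)^+\to 0$ is what makes the upper bound tight, and it is exactly where the narrowing of the ``bumps'' of $\Gamma_\ve$ towards the countable set $D$ — encoded by the $O(\ve)$ support of $\Phi_\ve$ and the continuity from above of $\nu^2$ — enters. I expect the delicate point in writing this out to be the uniform control of the overshoot near accumulation points of $D$, handled through the summability $\sum_{z}\nu^2(\Delta_z)=\nu^2(\mbR^2)<+\infty$, which guarantees that only finitely many bumps have height exceeding any given threshold.
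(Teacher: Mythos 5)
Your proposal is correct, but it takes a genuinely different route from the paper's. The paper does not work with (super/sub)martingale characterizations at all: it augments the state with the prelimit bracket $\theta_\ve^{(ij)}(t)=\jqv{x_\ve(u_i,\cdot)}{x_\ve(u_j,\cdot)}_t$, invokes weak relative compactness and the identification of the limiting coordinate as the bracket of the limit (imported from \cite[Lemma~3]{Malovichko}), and then applies the portmanteau theorem to the closed set of paths satisfying $0\leqslant f_{n+1}(t)-f_{n+1}(s)\leqslant\int_s^t h_\delta(f_i(r)-f_j(r))\,dr$, where $h_\delta(z)=\nu^2(\Delta_0)^{-1}\iint_{\mbR^2}\Phi_\delta(z+q_1-q_2)\,\nu^2(dq_1dq_2)$ is an explicit continuous majorant with $\Gamma_\ve\leqslant h_\delta$ for $\ve<\delta$ --- the inequality resting on $c_\ve^2\leqslant\nu^2(\Delta_0)^{-1}$ and on $\Phi_\ve\leqslant\Phi_\delta$, which is exactly where the standing unimodality assumption on $\vf$ enters --- after which $h_\delta\downarrow\Gamma_0$ pointwise and rational $s,t$ plus continuity finish the proof. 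You instead extract the bracket inequality from supermartingale properties inherited under weak convergence (uniform integrability of the products, Skorokhod representation, then the observation that a continuous bounded-variation supermartingale is nonincreasing via Doob--Meyer), running the upper bound through arbitrary bounded continuous majorants $g\geqslant\Gamma_0$, the uniform overshoot estimate $\sup_z(\Gamma_\ve-g)^+\to 0$ deduced from your relaxed bound $\limsup_{\ve\to 0+,\,z'\to z}\Gamma_\ve(z')\leqslant\Gamma_0(z)$, and sup-convolutions $g_m\downarrow\Gamma_0$ justified by upper semicontinuity of $\Gamma_0$. Your domination step uses only the compact support of $\vf$, the uniform bound $c_\ve^2\leqslant\nu^2(\Delta_0)^{-1}$ (from $\Phi_\ve(0)=1$) and continuity from above of the finite pushforward of $\nu^2$ under $(q_1,q_2)\mapsto q_1-q_2$, so unlike the paper's $h_\delta$ comparison it does not need $\vf$ unimodal --- a genuine, if slight, gain in generality; what it costs is heavier machinery, and note that you tacitly rely on the paper's preceding lemma (joint martingality of the coordinates with respect to the common filtration) even to define $\jqv{x(u_i,\cdot)}{x(u_j,\cdot)}$, which is legitimately available at this point. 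Two small remarks: the worry in your closing paragraph about ``overshoot near accumulation points of $D$'' is already fully resolved by your compactness argument --- finiteness of the pushforward measure is all that is used, both there and for the upper semicontinuity of $\Gamma_0$ --- and to obtain the inequality almost surely simultaneously for all $s\leqslant t$ you should, as the paper does, first fix rational $s,t$ and then conclude by continuity.
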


\begin{proof}
Fix arbitrary $i,j\in \{1,\ldots,n\}$, $i\neq j$, and in the space $C([0;+\infty),\mbR^{n+1})$ consider the random elements
$$
\vec{x}_\ve^{(ij)}=(x_\ve(u_1,\cdot),\ldots,x_\ve(u_n,\cdot), \theta_\ve^{(ij)}),\quad \ve>0,
$$
where
$$
\theta_\ve^{(ij)}(t):=\jqv{x_\ve(u_i,\cdot)}{x_\ve(u_j,\cdot)}_t,\quad t\geqslant 0.
$$
As in the proof of~\cite[Lemma~3]{Malovichko} one can show that the family $\{\vec{x}_\ve^{(ij)},\; \ve>0\}$ is weakly relatively compact and that, if
$$
\vec{x}_{\ve_n}^{(ij)}\stackrel{w}{\longrightarrow}\vec{x}^{(ij)},\quad n\to\infty,
$$
in the space $C([0;+\infty),\mbR^{n+1})$ for some sequence $\{\ve_n\}_{n=1}^\infty$ strictly decreasing to zero, with $\vec{x}^{(ij)}:=(x(u_1,\cdot),\ldots,x(u_n,\cdot),\theta^{(ij)})$, then
$$
\theta^{(ij)}(t)=\jqv{x(u_i,\cdot)}{x(u_j,\cdot)}_t,\quad t\geqslant 0.
$$

Now, since the set
$$
\{\vec{f}=(f_1,\ldots,f_{n+1})\in C([0;+\infty),\mbR^{n+1}) \mid 0\leqslant f_{n+1}(t)-f_{n+1}(s)\leqslant \int\limits_s^t h_\delta(f_i(r)-f_j(r))\,dr\},
$$
where $0\leqslant s\leqslant t<+\infty$ and
$$
h_\delta(z):=\dfrac{1}{\nu^2(\Delta_0)} \iint\limits_{\mbR^2} \Phi_\delta(z+q_1-q_2)\,\nu^2(dq_1dq_2),\quad z\in\mbR,
$$
with $\delta>0$, is closed and
$$
\Gamma_\ve(z)=c_\ve^2 \iint\limits_{\mbR^2} \Phi_\ve(z+q_1-q_2)\,\nu^2(dq_1dq_2)\leqslant h_\delta(z),\quad z\in\mbR,
$$
for $\ve<\delta$, we obtain that
\begin{gather*}
\Prob{0\leqslant\theta^{(ij)}(t)-\theta^{(ij)}(s)\leqslant \int\limits_s^t h_\delta(x(u_i,r)-x(u_j,r))\,dr}\geqslant\\
\geqslant\varlimsup_{n\to\infty} \Prob{0\leqslant\theta_{\ve_n}^{(ij)}(t)- \theta_{\ve_n}^{(ij)}(s)\leqslant \int\limits_s^t h_\delta(x_{\ve_n}(u_i,r)-x_{\ve_n}(u_j,r))\,dr}=1.
\end{gather*}
Thus, for every $\delta>0$ with probability one
$$
0\leqslant\theta^{(ij)}(t)-\theta^{(ij)}(s)\leqslant \int\limits_s^t h_\delta(x(u_i,r)-x(u_j,r))\,dr,\quad 0\leqslant s\leqslant t<+\infty,\quad s,t\in\mbQ,
$$
and so with probability one
$$
0\leqslant\theta^{(ij)}(t)-\theta^{(ij)}(s)\leqslant \int\limits_s^t \Gamma_0(x(u_i,r)-x(u_j,r))\,dr,\quad 0\leqslant s\leqslant t<+\infty.
$$
The lemma is proved.
\end{proof}

\begin{lemma}
\label{lemma9}
With probability one for any $i,j\in \{1,\ldots,n\}$ we have
$$
\lim_{t\to +\infty} (x(u_i,t)-x(u_j,t))=0.
$$
\end{lemma}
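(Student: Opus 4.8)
The plan is to fix a pair $i<j$ (recalling that $u_1<\ldots<u_n$; the case $i=j$ is trivial and $i>j$ follows by symmetry) and to study the scalar process $y(t):=x(u_i,t)-x(u_j,t)$. For every $\ve>0$ property~(ii) gives $x_\ve(u_i,t)\leqslant x_\ve(u_j,t)$ for all $t\geqslant 0$, and since the set of paths $\vec{f}=(f_1,\ldots,f_n)$ with $f_i(t)\leqslant f_j(t)$ for all $t$ is closed in $C([0;+\infty),\mbR^n)$, the portmanteau theorem yields $y(t)\leqslant 0$ for all $t\geqslant 0$ with probability one. As $\vec{x}$ is a martingale whose coordinates are Wiener processes, $y$ is a genuine continuous martingale, so $-y$ is a non-negative continuous martingale and hence, by the martingale convergence theorem, converges almost surely to a finite limit. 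Thus $y(t)\to L$ as $t\to+\infty$ almost surely for some random $L\leqslant 0$, and it remains only to show that $L=0$.

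The second ingredient is quadratic-variation information. Each one-point motion is a standard Brownian motion, so $\qv{x(u_i,\cdot)}_t=\qv{x(u_j,\cdot)}_t=t$, whence
$$
\qv{y}_t=2t-2\jqv{x(u_i,\cdot)}{x(u_j,\cdot)}_t.
$$
Combining this with the upper bound of Lemma~\ref{lemma8} (taken with $s=0$) gives the pathwise estimate
$$
\qv{y}_t\geqslant 2\int\limits_0^t\bigl(1-\Gamma_0(y(r))\bigr)\,dr,\quad t\geqslant 0.
$$

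Finally I would invoke the standard property of continuous martingales that $y$ converges as $t\to+\infty$ precisely, up to a null set, on the event where $\qv{y}_\infty<+\infty$. Since $y$ converges almost surely, we obtain $\qv{y}_\infty<+\infty$ and therefore $\int_0^{+\infty}\bigl(1-\Gamma_0(y(r))\bigr)\,dr<+\infty$ almost surely. Were $L<0$ on a set of positive probability, then on that set $\abs{y(r)}\geqslant\abs{L}/2>0$ for all sufficiently large $r$, so by~\eqref{equation7} of Lemma~\ref{lemma6} we would have $1-\Gamma_0(y(r))\geqslant 1-\sup_{\abs{z}\geqslant\abs{L}/2}\Gamma_0(z)>0$ eventually, forcing the integral to diverge — a contradiction. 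Hence $L=0$ almost surely, which is the assertion.

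The step I expect to be the main obstacle is the passage from almost-sure convergence of $y$ to the finiteness of $\qv{y}_\infty$, since it is this finiteness that makes the integral $\int_0^{+\infty}(1-\Gamma_0(y(r)))\,dr$ finite and lets the strict inequality~\eqref{equation7} pin the limit down. The mere convergence of $y$ is cheap — it is a one-sidedly bounded martingale — but without the quadratic-variation control the limit could a priori be any non-positive value; it is the interplay of the one-sided bound of Lemma~\ref{lemma8} with~\eqref{equation7} of Lemma~\ref{lemma6} that rules this out.
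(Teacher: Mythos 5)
Your proof is correct, and it ends in exactly the same contradiction as the paper's: if the a.s.\ limit of $y=x(u_i,\cdot)-x(u_j,\cdot)$ were nonzero, then Lemma~\ref{lemma8} (with $s=0$) together with~\eqref{equation7} of Lemma~\ref{lemma6} would force $\qv{y}_t$ to grow at least linearly, which is incompatible with $\qv{y}_\infty<+\infty$. Where you differ is in how the two preliminary facts --- a.s.\ convergence of $y$ and a.s.\ finiteness of $\qv{y}_\infty$ --- are obtained. The paper, following Lagunova, invokes the Dambis--Dubins--Schwarz theorem explicitly: it writes $x(u_i,t)-x(u_j,t)=(u_i-u_j)+\beta(\tau(t))$ with $\tau(t)=\qv{y}_t$ as in~\eqref{equation10}--\eqref{equation11}, notes that non-negativity of the difference traps $\tau(t)$ below the a.s.\ finite hitting time $\overline{\tau}=\inf\{t\geqslant 0\mid\beta(t)=-(u_i-u_j)\}$, and thereby gets the convergence of $y$ (continuity of $\beta$ at $\tau(+\infty)$) and the bound $\qv{y}_\infty\leqslant\overline{\tau}$ in one stroke; the contradiction is then phrased as $\tau(t)\to+\infty$. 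You instead apply the martingale convergence theorem to the non-negative continuous martingale $-y$ and then the standard fact that, up to a null set, $\{y\ \text{converges}\}=\{\qv{y}_\infty<+\infty\}$ for continuous martingales. Since that equivalence is itself customarily proved via Dambis--Dubins--Schwarz, the two arguments are close cousins, yours being marginally more economical at this point (no explicit time change or hitting time needed, and your ``eventually $\abs{y(r)}\geqslant\abs{L}/2$'' version avoids the paper's step showing the difference stays above $\delta_0$ for \emph{all} $t$). One practical remark in the paper's favour: the representation~\eqref{equation10} and the time change $\tau$ of~\eqref{equation11} are not throwaway devices --- they are reused in the proof of Lemma~\ref{lemma10}, where the bi-Lipschitz control of $\tau_z^{-1}$ transports Lebesgue-null sets --- so your shortcut defers rather than eliminates that machinery. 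Your individual steps all check out: the portmanteau argument for $y\leqslant 0$ (the set of ordered paths is closed in $C([0;+\infty),\mbR^n)$), the identity $\qv{y}_t=2t-2\jqv{x(u_i,\cdot)}{x(u_j,\cdot)}_t$ (both coordinates are standard Brownian motions), the non-negativity of the integrand $1-\Gamma_0$, and the application of~\eqref{equation7} at the random level $\abs{L}/2$ pointwise in $\omega$, which is legitimate because~\eqref{equation7} is a deterministic statement about $\Gamma_0$ holding for every fixed $\delta>0$.
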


\begin{proof}
Let us fix arbitrary $i,j\in \{1,\ldots,n\}$, $i>j$. The proof of the existence of the limit
$$
\lim_{t\to +\infty} (x(u_i,t)-x(u_j,t))
$$
is similar to the proof of~\cite[Lemma~1]{Lagunova}. Namely, we note (e.~g., see~\cite[Theorem~18.4]{Kallenberg}) that with probability one the following representation takes place:
\begin{equation}
\label{equation10}
x(u_i,t)-x(u_j,t)=(u_i-u_j)+\beta(\tau(t)),\quad t\geqslant 0,
\end{equation}
where $\{\beta(t),\; t\geqslant 0\}$ is a standard Wiener process (maybe defined on an extended probability space) and
\begin{equation}
\label{equation11}
\tau(t):=\qv{x(u_i,\cdot)-x(u_j,\cdot)}_t=
2t-2\jqv{x(u_i,\cdot)}{x(u_j,\cdot)}_t,\quad t\geqslant 0.
\end{equation}
Then
$$
x(u_i,t)-x(u_j,t)\geqslant 0,\quad t\geqslant 0,
$$
implies that
$$
\tau(t)\leqslant\overline{\tau},\quad t\geqslant 0,
$$
where
$$
\overline{\tau}:=\inf\{t\geqslant 0 \mid \beta(t)=-(u_i-u_j)\}<+\infty\quad \as
$$
Therefore, there exists the limit
$$
\lim_{t\to +\infty} \tau(t)=:\tau(+\infty)\leqslant\overline{\tau}
$$
and so, due to the continuity of $\beta$,
$$
\lim_{t\to +\infty} (x(u_i,t)-x(u_j,t))=(u_i-u_j)+\beta(\tau(+\infty)).
$$

Now suppose that
\begin{gather*}
\tau(+\infty)<\overline{\tau},
\intertext{i.~e.}
\lim_{t\to +\infty} (x(u_i,t)-x(u_j,t))>0.
\end{gather*}
Then there exists $\delta_0>0$ (depending on $\omega$) such that
$$
x(u_i,t)-x(u_j,t)>\delta_0,\quad t\geqslant 0.
$$
So using Lemma~\ref{lemma8} (with $s=0$) and Lemma~\ref{lemma6} we obtain that
\begin{align*}
\tau(t)&=2t-2\jqv{x(u_i,\cdot)}{x(u_j,\cdot)}_t\geqslant\\
&\geqslant 2t-2\int\limits_0^t \Gamma_0(x(u_i,s)-x(u_j,s))\,ds=\\
&=2\int\limits_0^t \left[1-\Gamma_0(x(u_i,s)-x(u_j,s))\right]\,ds\geqslant\\
&\geqslant 2(1-\sup_{\abs{z}\geqslant\delta_0} \Gamma_0(z)) \cdot t\rightarrow +\infty,\quad t\to +\infty,
\end{align*}
which contradicts the almost sure finiteness of $\overline{\tau}$.
\end{proof}

\begin{lemma}
\label{lemma10}
With probability one for any $i,j\in \{1,\ldots,n\}$ we have
$$
\lambda(\{t\geqslant 0 \mid x(u_i,t)-x(u_j,t)\in D\setminus \{0\}\})=0,
$$
where $\lambda$ is the one-dimensional Lebesgue measure and $D$ is defined in~\eqref{equation5}.
\end{lemma}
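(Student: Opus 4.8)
My plan is to fix a pair $i,j\in\{1,\ldots,n\}$ with $i\neq j$ (if $i=j$ the difference vanishes identically and there is nothing to prove, since $0\notin D\setminus\{0\}$) and, writing $Y(t):=x(u_i,t)-x(u_j,t)$, to establish for each \emph{fixed} $z\in D\setminus\{0\}$ that
$$
\lambda(A_z)=0\quad\as,\qquad A_z:=\{t\geqslant 0\mid Y(t)=z\}.
$$
Because $D$ is countable, so is $D\setminus\{0\}$; intersecting the probability-one events over the countably many such $z$ and over the finitely many pairs $(i,j)$, and using that a countable union of $\lambda$-null sets is $\lambda$-null, will then give the statement of the lemma. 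Recall from~\eqref{equation11} that $\tau=\qv{Y}$ is the quadratic variation of the continuous martingale $Y$.

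The first step is to bound the Stieltjes measure $d\tau$ from below. Combining Lemma~\ref{lemma8} with~\eqref{equation11}, for all $0\leqslant s\leqslant t$ one has
$$
\tau(t)-\tau(s)=2(t-s)-2\left[\jqv{x(u_i,\cdot)}{x(u_j,\cdot)}_t-\jqv{x(u_i,\cdot)}{x(u_j,\cdot)}_s\right]\geqslant 2\int\limits_s^t\left[1-\Gamma_0(Y(r))\right]dr.
$$
Since the integrand is nonnegative ($\Gamma_0\leqslant\Gamma_0(0)=1$) and this holds on every interval, the map $t\mapsto\tau(t)-2\int_0^t[1-\Gamma_0(Y(r))]\,dr$ is nondecreasing, so $d\tau\geqslant 2[1-\Gamma_0(Y(\cdot))]\,d\lambda$ as measures. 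Integrating over $A_z$, where $\Gamma_0(Y(\cdot))\equiv\Gamma_0(z)$, I obtain
$$
\int\limits_{A_z}d\tau\geqslant 2\left[1-\Gamma_0(z)\right]\lambda(A_z).
$$

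The second step is to check that $A_z$ is $d\tau$-null. Using the representation~\eqref{equation10}--\eqref{equation11} already invoked in Lemma~\ref{lemma9}, $Y(t)=(u_i-u_j)+\beta(\tau(t))$ for a standard Wiener process $\beta$, so $A_z=\{t\geqslant 0\mid\beta(\tau(t))=z-(u_i-u_j)\}$. As $\tau$ is continuous and nondecreasing with $\tau(0)=0$, the push-forward of $d\tau$ under $\tau$ is Lebesgue measure on $[0,\tau(+\infty)]$, and therefore
$$
\int\limits_{A_z}d\tau=\int\limits_0^{\tau(+\infty)}\1\{\beta(s)=z-(u_i-u_j)\}\,ds=0\quad\as,
$$
because a Wiener process spends zero Lebesgue time at any fixed level (this is just the occupation-density formula for $Y$). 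Finally, applying Lemma~\ref{lemma6} with $\delta=\abs{z}>0$ gives $\Gamma_0(z)\leqslant\sup_{\abs{w}\geqslant\abs{z}}\Gamma_0(w)<1$, so that $1-\Gamma_0(z)>0$; the two displayed inequalities then force $\lambda(A_z)=0$ almost surely.

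I expect the only real obstacle to be the possible presence of flat stretches of $\tau$ --- which is also why the idea behind~\cite[Lemma~6]{Malovichko} does not carry over. On such intervals $Y$ is frozen, so $d\tau$ does not register the Lebesgue time that $Y$ may spend at the level $z$, and the $d\tau$-null statement $\int_{A_z}d\tau=0$ cannot by itself control $\lambda(A_z)$. The device that circumvents this is the lower bound $d\tau\geqslant 2[1-\Gamma_0(Y(\cdot))]\,d\lambda$ from Lemma~\ref{lemma8}, which transfers the nullity from $d\tau$ to $d\lambda$; it is effective precisely because Lemma~\ref{lemma6} keeps $\Gamma_0$ strictly below $1$ away from the origin, so the factor $1-\Gamma_0(z)$ is positive.
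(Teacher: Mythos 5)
Your proof is correct, and it reaches the conclusion by a genuinely different mechanism than the paper's. The paper localizes: it fixes a threshold $z\in(0;u_i-u_j)$, stops at the a.s.\ finite time $\sigma_z=\sup\{t\geqslant 0\mid x(u_i,t)-x(u_j,t)\geqslant z\}$ (this is exactly where Lemma~\ref{lemma9} is needed), uses the strict positivity of the stopped minimum $r_z$ of the non-negative martingale together with~\eqref{equation7} to make the time change $\tau$ bi-Lipschitz on $[0;\sigma_z]$, and then transfers the zero Lebesgue occupation of the countable set $D_{ij}(z)$ by $\beta$ back through the Lipschitz inverse $\tau_z^{-1}$, finally taking the union over $z=1/k$. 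You instead argue level by level and globally: you upgrade Lemma~\ref{lemma8} to a domination of measures $d\tau\geqslant 2\left[1-\Gamma_0(x(u_i,\cdot)-x(u_j,\cdot))\right]d\lambda$, and combine the time-change identity (the push-forward of $d\tau$ under $\tau$ is Lebesgue measure on the range of $\tau$) with the zero occupation time of $\beta$ at the single level $z-(u_i-u_j)$ to get $\int_{A_z}d\tau=0$ for $A_z=\{t\geqslant 0\mid x(u_i,t)-x(u_j,t)=z\}$; the strict bound~\eqref{equation7} at the fixed $z\neq 0$ then forces $\lambda(A_z)=0$, and countability of $D$ finishes. The skeleton is shared --- Lemma~\ref{lemma8}, the representation~\eqref{equation10}--\eqref{equation11}, and~\eqref{equation7} --- but your route dispenses entirely with Lemma~\ref{lemma9}, with the stopping construction, and with the non-negativity of $x(u_i,\cdot)-x(u_j,\cdot)$ (so it treats $i<j$ and $i>j$ symmetrically and would survive even if $\tau(+\infty)=+\infty$), at the modest price of a per-level countable union over $D\setminus\{0\}$ instead of over the sets $D\cap[1/k;+\infty)$. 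Your closing diagnosis is also accurate: flat stretches of $\tau$ are precisely why $d\tau$-nullity of $A_z$ alone proves nothing about $\lambda(A_z)$, and your pointwise density bound $2\left[1-\Gamma_0(z)\right]>0$ on $A_z$ plays the role of the paper's uniform lower bound $2\rho_z$ on $[0;\sigma_z]$. (One cosmetic slip: the fact that Brownian motion spends zero Lebesgue time at a level is the occupation-density formula for $\beta$, not for $Y$; and measurability of $r\mapsto\Gamma_0(x(u_i,r)-x(u_j,r))$ is harmless since $\Gamma_0$ is a pointwise limit of the continuous functions $\Gamma_\ve$.)
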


\begin{proof}
Let us fix $i,j\in\{1,\ldots,n\}$, $i>j$, and $z\in (0;u_i-u_j)$ and set
$$
\sigma_z:=\sup\{t\geqslant 0 \mid x(u_i,t)-x(u_j,t)\geqslant z\}.
$$
From Lemma~\ref{lemma9} it follows that $\sigma_z$ is finite almost surely. Also let $\tau_z$ be the restriction of the (random) mapping $\tau\colon [0;+\infty) \rightarrow [0;+\infty)$ defined in~\eqref{equation11} to the set $[0;\sigma_z]$ and $\tau_z^{-1}$ be its inverse. Then using~\eqref{equation10} we get
\begin{gather*}
\lambda(\{t\geqslant 0 \mid x(u_i,t)-x(u_j,t)\in D \cap [z;+\infty)\})=\\
=\lambda(\{0\leqslant t\leqslant\sigma_z \mid \beta(\tau(t))\in D_{ij}(z)\})=\lambda(\tau_z^{-1}(C_{ij}(z))),
\intertext{where}
D_{ij}(z):=D \cap [z-(u_i-u_j);+\infty)
\intertext{and}
C_{ij}(z):=\{t\geqslant 0 \mid \beta(t)\in D_{ij}(z)\}.
\end{gather*}

Moreover, since the stochastic process $\{x(u_i,t)-x(u_j,t),\; t\geqslant 0\}$ is a non-negative (continuous) martingale, we have
$$
r_z:=\inf\{x(u_i,t)-x(u_j,t) \mid 0\leqslant t\leqslant\sigma_z\}>0,
$$
and so by Lemma~\ref{lemma6}
$$
\rho_z:=1-\sup_{\abs{z'}\geqslant r_z} \Gamma_0(z')>0.
$$
Thus, we obtain that for any $s,t\in [0;\sigma_z]$, $s<t$, we have
$$
2\geqslant\dfrac{\tau(t)-\tau(s)}{t-s}\geqslant\dfrac{1}{t-s} \int\limits_s^t \left[1-\Gamma_0(x(u_i,r)-x(u_j,r))\right]\,dr\geqslant 2\rho_z.
$$
Therefore, for any $s,t\in [0;\tau(\sigma_z)]$, $s<t$,
$$
\dfrac{1}{2}\leqslant\dfrac{\tau_z^{-1}(t)-\tau_z^{-1}(s)}{t-s}\leqslant \dfrac{1}{2\rho_z}.
$$
This implies that the function $\tau_z$ is absolutely continuous and so it maps the sets of zero Lebesgue measure to the sets with the same property. Thus, from
$$
\lambda(C_{ij}(z))=0
$$
it follows that
$$
\lambda(\{t\geqslant 0 \mid x(u_i,t)-x(u_j,t)\in D \cap [z;+\infty)\})=0.
$$

Finally, since $z\in (0;u_i-u_j)$ was arbitrary and $x(u_i,\cdot)-x(u_j,\cdot)\geqslant 0$, we can conclude that
\begin{gather*}
\lambda(\{t\geqslant 0 \mid x(u_i,t)-x(u_j,t)\in D\setminus \{0\}\})=\\
=\lambda\left(\bigcup_{k\geqslant 1} \{t\geqslant 0 \mid x(u_i,t)-x(u_j,t)\in D \cap [1/k;+\infty)\}\right)=0.
\end{gather*}
The assertion of the lemma now follows trivially.
\end{proof}

To finish the proof of Theorem~\ref{theorem4} (obviously, it is enough to consider the case when $u_1<\ldots<u_n$) suppose that $(x(u_1,\cdot),\ldots,x(u_n,\cdot))$ is one of the weak limits (as $\ve\to 0+$) of the family $\{\vec{x}_\ve=(x_\ve(u_1,\cdot),\ldots, x_\ve(u_n,\cdot)),\; \ve>0\}$. Then for any $i,j\in\{1,\ldots,n\}$, $i>j$, the stochastic process $\{x(u_i,t)-x(u_j,t),\; t\geqslant 0\}$ is a non-negative martingale and so does not leave zero after hitting it. Since both $x(u_i,\cdot)$ and $x(u_j,\cdot)$ are standard Brownian motions, this implies that
$$
\jqv{x(u_i,\cdot)}{x(u_j,\cdot)}_t\geqslant\int\limits_0^t \1\{x(u_i,s)=x(u_j,s)\}\,ds,\quad t\geqslant 0.
$$
However, from Lemma~\ref{lemma8} (with $s=0$) and Lemma~\ref{lemma10} it follows that
$$
\jqv{x(u_i,\cdot)}{x(u_j,\cdot)}_t\leqslant \int\limits_0^t \1\{x(u_i,s)=x(u_j,s)\}\,ds,\quad t\geqslant 0.
$$
Hence
$$
\jqv{x(u_i,\cdot)}{x(u_j,\cdot)}_t=\int\limits_0^t \1\{x(u_i,s)=x(u_j,s)\}\,ds,\quad t\geqslant 0.
$$

Thus, we conclude that any weak limit (as $\ve\to 0+$) of the family $\{\vec{x}_\ve,\; \ve>0\}$ coincides in distribution with the $n$-point motion of the Arratia flow, which means that this family converges weakly to the latter.

\textbf{Acknowledgements.} The author is grateful to the anonymous referee for the careful reading of the paper and useful comments.

\end{document}